\newtheorem{corollary}{Corollary}
\newtheorem{proposition}{Proposition}
\newtheorem{theorem}{Theorem}
\newtheorem*{theorem*}{Theorem}
\newtheorem*{thmA}{Theorem A}
\newtheorem{definition}{Definition}
\newtheorem{question}{Question}
\newcommand\wt{\widetilde}
\newcommand\ov{\overline}
\newcommand\surj{\twoheadrightarrow}
\begin{document}
\title{Fundamental Theorems for the $K$-theory of {\bf S}-algebras, I:\\ the connective case}
\author{C. Ogle}
\maketitle

\section{Introduction} The Fundamental Theorem of $K$-theory (first formulated by Bass in low dimensions and later extended by Quillen to all dimensions \cite{dg}) yields an isomorphism
\[
K_*(R[t,t^{-1}])\cong K_*(R)\oplus K_{*-1}(R)\oplus NK^+_*(R)\oplus NK^-_*(R)
\]
where $R$ is a discrete ring, $K_*(_-)$ denotes its Quillen $K$-groups, and
\[
NK^{\pm}_*(R)\cong NK_*(R) := \ker(K_*(R[t])\overset{t\mapsto 0}{\longrightarrow} K_*(R))
\]
The groups here are possibly non-zero in negative degrees, given that they are computed as the homotopy groups of a (potentially) non-connective delooping of the Quillen $K$-theory space, arising from a spectral formulation of this result \cite{cw2}. The nil-groups $NK_*(R)$ capture subtle \lq\lq tangential\rq\rq\ information about $R$, and are remarkably difficult to compute. It is the purpose of this series of papers to extend this fundamental theorem to the Waldhausen $K$-theory of {\bf S}-algebras, and investigate the structure of the corresponding nil-groups.
\vskip.2in
As a first step we consider the connective case. Specifically, let ${\cal{CSA}}$ denote the category of connective ($=<-1>$-connected) {\bf S}-algebras and {\bf S}-algebra homomorphisms, in the sense of \cite{ekmm}. Write $K(_-)$ resp.\ $TC(_-):{\cal{CSA}}\to (spectra)_*$ for the functor which associates to an {\bf S}-algebra its (connective) Waldhausen $K$-theory resp.\ topological cyclic homology spectrum. In \cite{dgm} the authors construct a natural transformation, referred to as the {\it topological trace map} (modeled on the original Dennis trace)
\begin{equation}
Tr(_-):K(_-)\to TC(_-)
\end{equation}
and prove the following:

\begin{thmA} [DGM] Let $f:A\to B$ be a morphism in $\cal{CSA}$ which is surjective on $\pi_0$ with nilpotent kernel. Then $Tr$ induces an equivalence of spectra
\[
hofib(K(f):K(A)\to K(B))\overset{\simeq}{\to} hofib(TC(f):TC(A)\to TC(B))
\]
\end{thmA}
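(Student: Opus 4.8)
The strategy is the one of Goodwillie and McCarthy, adapted to {\bf S}-algebras: reduce to square-zero extensions and compare, in the square-zero bimodule, the Goodwillie towers of relative $K$-theory and relative $TC$, using the identification of the derivative of $K$-theory with topological Hochschild homology. The assertion of Theorem~A is equivalent to the vanishing of the total homotopy fiber of
\[
\xymatrix{
K(A)\ar[r]^{Tr}\ar[d] & TC(A)\ar[d] \\
K(B)\ar[r]^{Tr} & TC(B)
}
\]
which I will call the \emph{birelative} term of $f$. Writing $I$ for the kernel of $f$ — its homotopy fiber, a non-unital connective {\bf S}-algebra — nilpotence of $I$ lets us factor $f$ as a finite composite of square-zero extensions, so by naturality of $Tr$ it suffices to treat the case of a single square-zero extension; and a standard resolution argument (replacing a general square-zero extension by split ones over a suitable base) reduces us further to the split case $A=B\ltimes M$ with $M$ a connective $B$-bimodule. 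All of these reductions are compatible with $Tr$ because $K$, $TC$ and $Tr$ commute with the relevant realizations and filtered colimits.

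In the split case put $\mathcal K(B;M):=\mathrm{hofib}\bigl(K(B\ltimes M)\to K(B)\bigr)$ and $\mathcal T(B;M):=\mathrm{hofib}\bigl(TC(B\ltimes M)\to TC(B)\bigr)$, reduced functors of the bimodule $M$ related by $Tr$. Goodwillie's connectivity estimates show that both are analytic: if $M$ is $r$-connected then each is roughly $2r$-connected and agrees with its $q$-th Taylor approximation through a range growing with $r$; inducting up the connectivity of $M$ (legitimate since $M$ is connective), it therefore suffices to prove that $Tr$ induces an equivalence on every homogeneous layer $D_q\mathcal K(B;-)\to D_q\mathcal T(B;-)$, for all connective {\bf S}-algebras $B$.

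For $q=1$ the Dundas--McCarthy theorem identifies $D_1\mathcal K(B;-)$ with $THH(B;-)$, the linearization of $K$-theory. On the $TC$ side one uses the cyclic-bar (weight) decomposition $THH(B\ltimes M)\simeq\bigvee_{k\ge 0}THH(B;M^{\wedge_B k})_{hC_k}$: after linearizing in $M$ only the weight-one summand survives, and it appears with trivial geometric fixed points, i.e.\ as the \emph{free} cyclotomic spectrum on $THH(B;M)$; since the Tate construction kills induced spectra, $TC$ of such a spectrum recovers its underlying spectrum, so $D_1\mathcal T(B;-)\simeq THH(B;-)$ as well, and the cyclotomic trace realizes the identity between the two. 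For $q>1$ one passes to cross-effects: the $q$-th cross-effect of $\mathcal K$ (resp.\ $\mathcal T$) in variables $M_1,\dots,M_q$ is again the birelative term of an iterated split square-zero extension of $B$, so peeling off one variable at a time reduces $D_q(Tr)$ to the $q=1$ statement applied over the varying bases $B\ltimes M'$. Combined with analyticity this forces $Tr$ to be an equivalence on birelative terms, which proves the theorem.

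The main obstacle is the $TC$ side of this argument. $TR$ and $TC$ are formed as homotopy limits — over the restriction and Frobenius maps, one prime at a time, together with a rational summand — and homotopy limits degrade the connectivity bounds on which analyticity (and the commutation of $TC$ with the stabilization defining $D_1$) depends. One must therefore work prime-by-prime with the \emph{genuine} $C_{p^n}$-fixed points of $THH$ and the full cyclotomic structure — not merely the $S^1$-homotopy-fixed-point (Tate) construction underlying $HC^-$ — control the restriction and Frobenius maps on the towers of birelative terms, and check that these towers are pro-equivalent in the range dictated by the connectivity of $M$ before passing to the limit. It is exactly the use of genuine fixed points that makes the weight-$p$ contributions to the layers behave correctly and lets the comparison hold integrally rather than only rationally.
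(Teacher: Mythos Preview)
The paper does not prove Theorem~A: it is quoted from \cite{dgm} (Dundas--Goodwillie--McCarthy) and used as a black box, so there is no ``paper's own proof'' to compare against. Your outline is a faithful sketch of the DGM argument itself --- reduction to split square-zero extensions, analyticity of the relative $K$- and $TC$-functors in the bimodule, and identification of the homogeneous layers via $THH$ with its cyclotomic structure --- and in fact these are exactly the techniques the paper borrows to prove its own Theorems~\ref{thm:main}, \ref{thm:second}, and \ref{thm:fund} (see the three Claims in the proof of Theorem~\ref{thm:main} and the appeal to \cite{lm}).

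One small caution on your sketch: the reduction step ``nilpotence of $I$ lets us factor $f$ as a finite composite of square-zero extensions'' is more delicate than stated. For connective {\bf S}-algebras the kernel $I$ is a non-unital {\bf S}-algebra and ``$I^n$'' is not literally defined; the DGM reduction instead passes through simplicial rings (as in the paper's Claim~1) and then resolves by free/tensor algebras before filtering by powers of the ideal. Your later paragraphs implicitly assume this passage, but the opening sentence suggests a direct factorization at the level of {\bf S}-algebras that isn't available without that intermediate step.
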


We show how the techniques used to prove this result, in conjunction with Goodwillie's Calculus of Functors \cite{g1, g2, g3}, allow for a geodesic extension of the Bass-Quillen theorem to the (Waldhausen) $K$-theory of connective {\bf S}-algebras (in the process recovering the main result of \cite{hkvww1}). Moreover, the same methods can be used to prove two localization theorems for the relative 
$K$-theory associated to a 1-connected morphism of connective {\bf S}-algebras, something we show first. 
\vskip.2in

We would like to thank J. Klein, A. Salch, P. Goerss, J. Rognes, J. Harper, and G. Mislin for helpful communications and conversations during the preparation of this paper.
\vskip.3in


\section{Relative $K$-theory and localization}


\subsection{Connective localization} Connective localization is of a very particular type; nevertheless this first result is of interest due to its method of proof. Following \cite{bk1}, we call an {\bf S}-algebra $B$ {\it solid} if $B\wedge B = B\underset{S}{\wedge} B \simeq B$, with the equivalence induced by the multiplication on $B$. If $f:A\to B$ is a homomorphism of {\bf S}-algebras, set $K(f:A\to B) := hofib(K(f):K(A)\to K(B))$.  

\begin{theorem}\label{thm:main} Let $f:A\to B$ be a 1-connected morphism in $\cal{CSA}$, and $R$ a solid, unital, connective {\bf S}-algebra. Then there is a natural weak equivalence
\[
K(f:A\to B)\wedge R\simeq K(f\wedge id: A\wedge R\to B\wedge R)
\]
\end{theorem}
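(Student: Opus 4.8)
\section*{Proof proposal for Theorem \ref{thm:main}}

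The plan is to reduce first to a statement about $TC$ by invoking Theorem A, and then to run Goodwillie's calculus of functors: $1$-connectedness will force the relevant Taylor tower to converge, while solidity of $R$ will force $-\wedge_S R$ to commute with every construction that appears. To begin, since $f$ is $1$-connected it is an isomorphism on $\pi_0$; the same is then true of $f\wedge id$, because $\pi_0(A\wedge R)\cong\pi_0(A)\otimes\pi_0(R)$ and similarly for $B$. Hence both $f$ and $f\wedge id$ are surjective on $\pi_0$ with trivial — in particular nilpotent — kernel, and Theorem A supplies natural equivalences $K(f:A\to B)\xrightarrow{\simeq}TC(f:A\to B)$ and $K(f\wedge id)\xrightarrow{\simeq}TC(f\wedge id)$, where $TC(g:X\to Y):=hofib(TC(X)\to TC(Y))$. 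Smashing the first with $R$, it remains to produce a natural equivalence $TC(f:A\to B)\wedge R\simeq TC(f\wedge id:A\wedge R\to B\wedge R)$.

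For this, view $X\mapsto hofib(TC(A)\to TC(X))$ as a reduced functor on connective $S$-algebras under $A$, vanishing at $X=A$, and form its Goodwillie tower $\{P_n\}$ with layers $\{D_n\}$. Because $TC$ is analytic (\cite{dgm, g3}) and $I:=hofib(f)$ is $1$-connected — hence within the radius of convergence — the canonical map $TC(f)\to\mathrm{holim}_n P_nTC(f)$ is an equivalence; the same holds for $f\wedge id$, since $I\wedge R=hofib(f\wedge id)$ is again $1$-connected ($R$ being connective). Each layer is built from the cross-effects of $TC$, which are iterated topological Hochschild constructions of $A$ evaluated on $I$ — the linear term being $THH(A;I)$ — and each such construction is a geometric realization of wedges of smash products of the form $I^{\wedge a_0}\wedge A^{\wedge a_1}\wedge\cdots$. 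Here solidity enters: $R\wedge_S R\simeq R$ makes $R$ an idempotent $S$-algebra, hence canonically commutative with $R^{\wedge k}\simeq R$ for all $k\ge 1$; consequently $THH(R)\simeq R$ as a cyclotomic spectrum, and — keeping track of the smash powers — $THH(A;M)\wedge R\simeq THH(A\wedge R;M\wedge R)$ for every $A$-bimodule $M$, and likewise for every higher cross-effect. Since $-\wedge_S R$ is exact, an induction over the fiber sequences $D_nTC\to P_nTC\to P_{n-1}TC$ yields natural equivalences $P_nTC(f)\wedge R\simeq P_nTC(f\wedge id)$ for all $n$.

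It remains to pass to the homotopy limit. Since $I$ is $1$-connected, $D_nTC(f)$ is $\kappa(n)$-connected with $\kappa(n)\to\infty$, so the maps in the tower $\{P_nTC(f)\}$ have connectivity tending to infinity; the same holds after applying $-\wedge_S R$, as smashing with a connective spectrum does not decrease the connectivity of the layers. Hence the natural map $\bigl(\mathrm{holim}_n P_nTC(f)\bigr)\wedge R\to\mathrm{holim}_n\bigl(P_nTC(f)\wedge R\bigr)$ is an equivalence, and combining this with the two convergence statements and the term-by-term equivalences above gives $TC(f)\wedge R\simeq TC(f\wedge id)$, whence the theorem; naturality is evident throughout. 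I expect the main obstacle to be the middle step: identifying the cross-effects of relative $TC$ with iterated topological Hochschild constructions precisely enough, and verifying that solidity of $R$ is exactly the hypothesis under which each of them — equivalently, $THH$ with bimodule coefficients, as a cyclotomic spectrum — commutes with $-\wedge_S R$ (in effect, that $S\to R$ is \lq\lq $THH$-\'etale\rq\rq). Granting this, the analyticity of $TC$ together with the connectivity estimates closes the argument.
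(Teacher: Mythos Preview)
Your argument is sound in outline and reaches the same endpoint as the paper --- solidity of $R$ forces $THH$ with bimodule coefficients to commute with $-\wedge R$ --- but the route is genuinely different. The paper does \emph{not} pass through Theorem~A; it stays on the $K$-theory side throughout, running the DGM reduction chain (simplicial rings, then split square-zero extensions by $0$-reduced ideals, then free bimodules $A_\bullet[Y_\bullet]$) and invoking Lindenstrauss--McCarthy to identify the $n$-th Goodwillie derivative of $X_\bullet\mapsto\wt K(A_\bullet;\wt A_\bullet[X_\bullet])$ as $sd_nTHH(A_\bullet;A_\bullet)$ with its naive $C_n$-action. Solidity is then applied directly to these edgewise-subdivided $THH$ spectra. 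The $TC$ statement appears only afterwards, as a corollary. A second structural difference: the paper never builds a direct comparison map, but instead compares both sides to a third functor $F_3(A)=K(A\wedge R)\wedge R$ via the zigzag $F_1\to F_3\leftarrow F_2$ induced by the unit $S\to R$; you should make your comparison map (or zigzag) explicit.

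What each approach buys: yours is conceptually tidy --- one appeal to Theorem~A and then calculus on $TC$ --- and avoids the DGM reduction steps. The paper's route, however, sidesteps exactly the obstacle you flag at the end: by working with the LM tower for $\wt K$, the derivatives are \emph{non-equivariant} (or merely naive $C_n$-) $THH$ spectra, where the solidity computation $sd_nTHH(A)\wedge R\simeq sd_nTHH(A\wedge R)$ is immediate. Your $TC$ tower, by contrast, has layers built from $THH$ as a genuine \emph{cyclotomic} spectrum (with its restriction/Frobenius maps and $C_{p^k}$-fixed points), and checking that $-\wedge R$ commutes with all of that structure is a real additional burden --- it is not automatic from $R\wedge R\simeq R$ alone. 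Your instinct that this is ``the main obstacle'' is correct, and it is precisely what the paper's choice of route avoids.
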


\begin{proof} Fixing $E$, define functors $F_i, 1\le i\le 3$ on ({\bf S}-algebras) by
\begin{gather*}
F_1(A) := K(A)\wedge R,\\
F_2(A) := K(A\wedge R),\\
F_3(A) := K(A\wedge R)\wedge R
\end{gather*}
The unit map $S\to R$ determines natural transformations $F_1(_-)\to F_3(_-)\leftarrow F_2(_-)$. We follow the program of \cite{dgm} to first provide a sequence of reductions.
\vskip.2in

\underbar{Claim 1}\ \ The functors $F_i$ are determined by their value on simplicial rings. In fact, the functor $A_{\bullet}\mapsto A_{\bullet}\wedge R$ (degreewise smashing with $R$) preserves both simplicial weak equivalences and connectivity in the simplicial coordinate. The method of \cite[III.3]{dgm} applies to yield the result (as in {\it loc.\ cit.}, one allows {\bf S}-algebra maps between simplicial rings).
\vskip.2in

Now suppose given a 1-connected map $f_{\bullet}:A_{\bullet}\to B_{\bullet}$ of simplicial rings. Resolving by tensor algebras over $S$, we may assume $f_{\bullet}$ is represented by a split surjection with $I_{\bullet} := ker(f_{\bullet})$ a 0-reduced simplicial ideal. Filtering by powers of $I_{\bullet}$, we conclude
\vskip.2in

\underbar{Claim 2}\ \  If the statement of the theorem is true for split square-zero extensions of simplicial rings with 0-reduced kernel, then it is true in general.
\vskip.2in

Representing the split square-zero extension as $A_{\bullet}\ltimes M_{\bullet}\surj A_{\bullet}$ ($M_{\bullet}$ an $A_{\bullet}$ bi-module), we can resolve $M_{\bullet}$ by a free bimodule $A_{\bullet}[Y_{\bullet}]\overset{\simeq}{\surj} M_{\bullet}$, yielding the further reduction to
\vskip.2in

\underbar{Claim 3}\ \  If the statement of Theorem \ref{thm:main} is true for all split square-sero extensions of the form $A_{\bullet}\ltimes A_{\bullet}[Y_{\bullet}]\surj A_{\bullet}$ for 0-reduced $Y_{\bullet}$, then it is true in general.
\vskip.2in

These type of extensions are studied by Lindenstrauss and McCarthy in \cite{lm}, where the homotopy fibre $hofib(K(A_{\bullet}\ltimes A_{\bullet}[Y_{\bullet}])\surj K(A_{\bullet}))$ is abbreviated as $\wt{K}(A_{\bullet};\wt{A}_{\bullet}[Y_{\bullet}])$ (the \lq\lq tilde\rq\rq\ used so as to distinguish these groups from the stable $K$-groups of $A_{\bullet}$ with coefficients in $A_{\bullet}[Y_{\bullet}]$). This functor can be represented as a homotopy functor on basepointed simplicial sets (aka spaces)
\[
X_{\bullet}\mapsto \wt{K}(A_{\bullet};\wt{A}_{\bullet}[X_{\bullet}])
\]
We define three functors on basepointed simplicial sets:
\begin{gather*}
\wt{F}_1(X_{\bullet}) := \wt{K}(A_{\bullet}\wedge R; \wt{A}_{\bullet}[X_{\bullet}]\wedge R);\\
\wt{F}_2(X_{\bullet}) := \wt{K}(A_{\bullet}; \wt{A}_{\bullet}[X_{\bullet}])\wedge R;\\
\wt{F}_3(X_{\bullet}) := \wt{K}(A_{\bullet}\wedge R; \wt{A}_{\bullet}[X_{\bullet}]\wedge R)\wedge R.
\end{gather*}

Again, there are natural transformations of homotopy functors $\wt{F}_1(_-)\to \wt{F}_3(_-)\leftarrow \wt{F}_2(_-)$. Although not originally stated in this generality, the methods of \cite{lm} apply essentially without change to show

\begin{theorem*} [LM] The homotopy functors $\wt{F}_i(_-)$ are 0-analytic.
\end{theorem*}

It remains to compare coefficients of Goodwillie derivatives. This is accomplished by

\begin{theorem*} [LM] For each $n\ge 1$, the n-th Goodwillie derivatives of $\wt{F}_i(_-), 1\le i\le 3$ are represented (respectively) as spectra with $C_n$-action by
\begin{gather*}
sd_n THH(A_{\bullet};A_{\bullet})\wedge R;\\
sd_n THH(A_{\bullet}\wedge R;A_{\bullet}\wedge R);\\
sd_n THH(A_{\bullet}\wedge R;A_{\bullet}\wedge R)\wedge R.
\end{gather*}
where $sd_n$ is the n-th edgewise subdivision functor.
\end{theorem*}

But now for $THH$, as well as its edgewise subdivisions, there are natural equivalences
\[
sd_n THH(A_{\bullet};A_{\bullet})\wedge R\overset{\simeq}{\to} sd_n THH(A_{\bullet}\wedge R;A_{\bullet}\wedge R)\wedge R
\overset{\simeq}{\leftarrow} sd_n THH(A_{\bullet}\wedge R;A_{\bullet}\wedge R)
\]
arising from the equivalence $R\simeq R\wedge R$, together with the unit map $S\to R$. By convergence of the respective Goodwillie towers, we conclude there are equivalences of homotopy functors on the category of basepointed connected simplicial sets
\[
\wt{F}_1(_-)\overset{\simeq}{\to} \wt{F}_3(_-)\overset{\simeq}{\leftarrow} \wt{F}_2(_-)
\]
which completes the proof of Theorem \ref{thm:main}.
\end{proof}

By Theorem A, we have

\begin{corollary} Let $f:A\to B$ be a 1-connected homomorphism of connective {\bf S}-algebras, with $R$ connective, unital, and solid. Then there are equivalences of spectra
\[
TC(f:A\to B)\wedge R\simeq TC(f\wedge id:A\wedge R\to B\wedge R)
\]
where $TC(_-):({\bf S}$-$algebras)\to (spectra)$ associates to an {\bf S}-algebra its topological cyclic homology spectrum.
\end{corollary}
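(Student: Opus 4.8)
\emph{Proof proposal.} The plan is to deduce the corollary from Theorem A together with Theorem \ref{thm:main} by a short chain of natural equivalences; the only genuine work is checking that Theorem A applies not merely to $f$ but also to $f\wedge id$.

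First I would record the relevant connectivity input. Since $R$ is connective, the functor $_-\wedge R$ does not lower connectivity, and $\mathrm{cofib}(f\wedge id)\simeq \mathrm{cofib}(f)\wedge R$ because $_-\wedge R$ preserves cofibre sequences. As $f$ is $1$-connected, $\mathrm{cofib}(f)$ is $2$-connected, so $\mathrm{cofib}(f)\wedge R$ is $2$-connected, whence $f\wedge id:A\wedge R\to B\wedge R$ is again $1$-connected. In particular $f\wedge id$ is an isomorphism on $\pi_0$, hence surjective on $\pi_0$ with (trivially) nilpotent kernel, so Theorem A applies. This produces two natural equivalences induced by the topological trace,
\[
Tr:K(f:A\to B)\overset{\simeq}{\to}TC(f:A\to B),\qquad Tr:K(f\wedge id)\overset{\simeq}{\to}TC(f\wedge id).
\]

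Next I would smash the first equivalence with $R$ and splice in Theorem \ref{thm:main}, obtaining
\[
TC(f:A\to B)\wedge R\;\simeq\;K(f:A\to B)\wedge R\;\simeq\;K(f\wedge id:A\wedge R\to B\wedge R)\;\simeq\;TC(f\wedge id:A\wedge R\to B\wedge R),
\]
where the outer equivalences are $Tr$ (smashed with $R$ on the left) and the middle one is Theorem \ref{thm:main}. Composing gives the asserted equivalence. Since $Tr$ is a natural transformation of functors on connective {\bf S}-algebras and $_-\wedge R$ is functorial, every link in this chain, and hence the composite, is natural in $f$.

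The main obstacle is precisely the applicability of Theorem A to $f\wedge id$: the hypothesis ``surjective on $\pi_0$ with nilpotent kernel'' is not a priori preserved by $_-\wedge R$, and one needs the $1$-connectedness assumption on $f$ (already required by Theorem \ref{thm:main}) to force it, via the cofibre computation above; without that hypothesis one would have to analyse $\pi_0(A\wedge R)$ and $\pi_0(B\wedge R)$ directly. A secondary, purely bookkeeping point is that one must use a fixed model of the relative terms $hofib(K(_-))$ and $hofib(TC(_-))$ throughout, so that ``$\wedge R$'' and ``pass to the homotopy fibre'' interact as expected; on the $K$-theory side this compatibility is exactly what Theorem \ref{thm:main} records, and the relative trace map respects it by naturality of $Tr$.
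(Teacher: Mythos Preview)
Your proposal is correct and follows exactly the route the paper intends: the paper's entire proof is the phrase ``By Theorem A, we have'', meaning one applies Theorem A to both $f$ and $f\wedge id$ and splices in Theorem \ref{thm:main}. Your explicit verification that $f\wedge id$ remains $1$-connected (hence satisfies the $\pi_0$ hypothesis of Theorem A) is a detail the paper leaves implicit but which you are right to spell out.
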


\underbar{\bf Remark 1}  It is worth comparing the statement of Theorem \ref{thm:main} with the original results of Weibel concerning localization (at a prime $p\in\mathbb Z$) of the relative algebraic $K$-groups associated to a nilpotent ideal \cite{cw1}.
\vskip.2in

The conditions imposed on $R$ above are, unfortunately, quite restrictive. An equivalent condition was considered by Rognes in \cite{jr}. Precisely, the hypothesis on $R$ is equivalent to the unit map $S\to R$ being smashing in the sense of \cite{jr}, where Rognes shows that this implies $R$ is the smashing localization of the sphere spectrum with respect to a homology theory $E$; the connectivity of $E$ then implies it occurs as the ordinary arithmetic localization of the sphere spectrum at some set of primes. In other words,

\begin{proposition} If $R$ is a solid, unital, connective {\bf S}-algebra, then $R = M\left(R'\right)$, the Moore spectrum of a discrete ring $R'\subset\mathbb Q$.
\end{proposition}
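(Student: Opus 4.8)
The plan is to identify the condition "$R$ solid, unital, connective" with a concrete arithmetic description by passing through stable homotopy theory. First I would recall that solidity, $R \wedge_S R \simeq R$ via multiplication, together with unitality says exactly that the unit map $\eta\colon S \to R$ is an \emph{idempotent} (smashing) map of $\mathbf S$-algebras: $\eta \wedge \mathrm{id}_R$ and $\mathrm{id}_R \wedge \eta$ are both equivalences. By Rognes's analysis in \cite{jr} this forces $R$ to be the smashing Bousfield localization $L_E S$ of the sphere spectrum with respect to the homology theory $E = R_* (-)$; in particular $R$ is a homotopy-commutative (indeed $E_\infty$, but we do not need this) $\mathbf S$-algebra and $R_*$ is a (graded) commutative ring flat over $\pi_* S$ in the appropriate sense.

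The second step is to exploit connectivity. Since $R$ is $(-1)$-connected and $\pi_0 S = \mathbb Z$, the unit induces a ring map $\mathbb Z = \pi_0 S \to \pi_0 R =: R'$. Smashing idempotence on $\pi_0$ gives $R' \otimes_{\mathbb Z} R' \cong R'$ via multiplication, so $R'$ is a \emph{solid ring} in the classical sense of Bousfield--Kan; the connective, unital hypotheses then pin $R'$ down among solid $\mathbb Z$-algebras. The solid rings that arise from smashing localizations of $S$ are, by Bousfield's classification of smashing localizations compatible with the structure here, exactly the subrings $R' = \mathbb Z[J^{-1}] \subset \mathbb Q$ for a set of primes $J$ (equivalently localizations $\mathbb Z_{(P)}$ at a set of primes $P$); no product-type or $\mathbb Z/n$ solid rings can occur because those localizations are not connective or not unital in the required sense. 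This identifies $R'$.

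The final step is to show the localization is already detected in $\pi_0$, i.e.\ $R \simeq M(R')$. One route: the homology theory $E$ is connective with $E_0 = R'$ and, because $L_E S$ is smashing and connective, the localization map $S \to R$ agrees with the arithmetic localization $S \to S[J^{-1}] = S \wedge M(\mathbb Z[J^{-1}])$; but $S \wedge M(R') = M(R')$ since the Moore spectrum of a subring of $\mathbb Q$ is flat. Hence $R \simeq M(R')$ as $\mathbf S$-algebras, where the ring structure on $M(R')$ is the evident one induced from $R' \subset \mathbb Q$.

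The main obstacle I anticipate is the second step — ruling out the "exotic" solid rings. The classical classification (Bousfield--Kan) of solid rings includes, besides the $\mathbb Z[J^{-1}]$, the quotients $\mathbb Z/n$, products $\mathbb Z[J^{-1}] \times \mathbb Z/n$, and certain colimits thereof; the work is to argue that the \emph{connectivity and unitality of the $\mathbf S$-algebra $R$}, via Rognes's theorem that $\eta$ smashing implies $R = L_E S$ for a connective $E$, force the localization to be of arithmetic (prime-inverting) type and hence exclude the torsion and product cases. I would handle this by noting that a smashing localization of $S$ with $\pi_0 \ne 0$ is automatically a localization at a set of primes (a consequence of the structure of the Bousfield lattice near the sphere, cf.\ \cite{jr}), which leaves only the $\mathbb Z[J^{-1}]$.
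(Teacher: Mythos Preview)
Your proposal is correct and follows essentially the same route as the paper: both arguments begin by recognizing that solidity plus unitality of $R$ is equivalent to the unit $S\to R$ being a smashing map in Rognes's sense, then invoke \cite{jr} to conclude $R\simeq L_E S$ for some homology theory $E$, and finally use connectivity to force the localization to be arithmetic. The paper's treatment is terser---it simply asserts that connectivity of $E$ implies the localization is ordinary arithmetic localization at a set of primes (appealing implicitly to the known classification of connective smashing localizations, cf.\ \cite{akb,dr})---whereas you take the more explicit detour through $\pi_0$ and the Bousfield--Kan classification of solid rings; your ``main obstacle'' of ruling out the torsion and product-type solid rings is exactly what the paper's one-line appeal to connective smashing localizations handles, so the two arguments converge.
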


The above theorem recovers, in the framework of (connective) {\bf S}-algebras, a localization result first proven by Waldhausen in \cite{fw1}.
This proposition is consistent with the fact the Bousfield class $<L>$ of any connective localization functor must be of the form $<MSG>$, the class associated to the Moore spectrum of an abelian group $G$ \cite{akb,dr}.
\vskip.2in


\subsection{Non-connective localization and the chromatic tower} As we have noted, solid {\bf S}-algebras are closely connected with localization functors. Recall \cite{akb,dr} that a localization functor $L$ is {\it smashing} if there exists a (non)-connective spectrum $E$ with $L(X)\simeq X\wedge E$ for any spectrum $X$. In this case we will sometimes write $L$ as $L_E$, with $L(X)$ referred to as the localization of $X$ at $E$. For such a smashing localization functor, the spectrum $E$ is recovered up to homotopy as $E = L_E(S)$, where $S$ denotes the sphere spectrum.  
\vskip.1in
A map of spectra $X\to Y$ is an {\it $E$-equivalence} if it becomes a weak equivalence upon localizing at $E$; a spectrum is {\it $E$-acyclic} if its localization at $E$ is contractible.
\vskip.2in

\begin{theorem}\label{thm:second} Let $f:A\to B$ be a 1-connected morphism in $\cal{CSA}$, and $L_E$ a smashing localization functor. If $f$ is an $E$-equivalence, then so is $K(f):K(A)\to K(B)$.
\end{theorem}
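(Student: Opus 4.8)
The plan is to mimic the structure of the proof of Theorem~\ref{thm:main}, but now working with $E$-localization rather than smashing with a solid $R$. The key preliminary observation is that since $L_E$ is smashing, $E$-equivalence of a map $g$ is equivalent to $g\wedge E$ being a weak equivalence, and more usefully, the property of being $E$-acyclic is closed under homotopy (co)fibre sequences, retracts, filtered homotopy colimits, and — crucially — geometric realization of simplicial spectra with degreewise $E$-acyclic terms. So the strategy is to show that $K(f:A\to B)$ is $E$-acyclic, which is equivalent to the assertion of the theorem since $K(f:A\to B) = hofib(K(f))$ and $K(f)$ is a weak equivalence after localization iff its homotopy fibre localizes to a point.

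First I would run the same sequence of reductions as in the proof of Theorem~\ref{thm:main}. By the method of \cite[III.3]{dgm}, it suffices to treat $f_\bullet: A_\bullet\to B_\bullet$ a $1$-connected map of simplicial rings (one checks degreewise smashing with $E$ and geometric realization interact correctly with $E$-acyclicity); note that the hypothesis ``$f$ is an $E$-equivalence'' passes to the simplicial resolution because $hofib(f_\bullet)$ is built, degreewise and after realization, out of $hofib(f)$. Resolving by tensor algebras over $S$ and filtering by powers of the kernel reduces to split square-zero extensions $A_\bullet\ltimes M_\bullet\surj A_\bullet$ with $0$-reduced kernel, and then resolving the bimodule reduces to extensions $A_\bullet\ltimes A_\bullet[Y_\bullet]\surj A_\bullet$ with $Y_\bullet$ $0$-reduced. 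At this point I invoke the Lindenstrauss--McCarthy analysis: the homotopy functor $X_\bullet\mapsto \wt{K}(A_\bullet;\wt{A}_\bullet[X_\bullet])$ is $0$-analytic, with $n$-th Goodwillie derivative represented by the $C_n$-spectrum $sd_n\, THH(A_\bullet;A_\bullet)$. By convergence of the Goodwillie tower on connected $X_\bullet$, it is enough to show each layer is $E$-acyclic, hence enough to show $THH(A_\bullet;A_\bullet)$ (and its edgewise subdivisions) is $E$-acyclic.

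The heart of the matter is therefore the $THH$ statement: if $f_\bullet:A_\bullet\to B_\bullet$ is a $1$-connected $E$-equivalence of simplicial rings, then the relative $THH$ — equivalently, in the square-zero reduction, $THH(A_\bullet;\wt{A}_\bullet[Y_\bullet])$ assembled over the bimodule — is $E$-acyclic. This follows because $THH$ is built from smash powers $A_\bullet\wedge\cdots\wedge A_\bullet$ over $S$, and smashing with $E$ commutes with these smash powers and with the cyclic bar construction; the relative term is then a realization of simplicial spectra each of whose terms involves at least one smash factor coming from $hofib(f)\wedge E \simeq *$, so each term is $E$-acyclic, and hence so is the realization. (Here one uses that $E$ is smashing, so $(-)\wedge E$ is a left adjoint commuting with all homotopy colimits, together with the $1$-connectedness to control convergence.) I expect \emph{this} step — verifying that the relative $THH$, through all the edgewise subdivisions and with the correct $C_n$-equivariance, is genuinely $E$-acyclic rather than merely $E$-acyclic after forgetting the action — to be the main obstacle, since one must be careful that localization at $E$ is compatible with the norm/subdivision structure used to identify the Goodwillie derivatives. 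Once the layers are shown $E$-acyclic, the convergent tower gives that $\wt{K}(A_\bullet;\wt{A}_\bullet[Y_\bullet])$ is $E$-acyclic, unwinding the reductions yields that $K(f:A\to B)$ is $E$-acyclic, and therefore $K(f)$ is an $E$-equivalence.
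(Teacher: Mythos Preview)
Your overall strategy is plausible but differs from the paper's route and contains two genuine gaps. The paper does not run the Lindenstrauss--McCarthy reduction sequence on $K$-theory; it invokes Theorem~A at the outset to replace $K(f:A\to B)$ by $TC(f:A\to B)$, then studies the homotopy functors $X_\bullet\mapsto TC(R\wedge T_S(X_\bullet))$ for $R=A,B$ and shows their \emph{first} derivatives agree after smashing with $E$ via the identity $THH(R',M)\wedge E\simeq THH(R'\wedge E,M\wedge E)$. This keeps the hypothesis $A\wedge E\simeq B\wedge E$ directly visible and avoids having to track it through the square-zero and free-bimodule reductions.

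The first gap is the sentence ``hence enough to show $THH(A_\bullet;A_\bullet)$ \dots\ is $E$-acyclic.'' That spectrum is the \emph{coefficient} of the $n$-th derivative, not the $n$-th layer at $Y_\bullet$, and there is no reason for $THH(A_\bullet;A_\bullet)$ itself to be $E$-acyclic (consider $A_\bullet=B_\bullet$). What you actually need is $E$-acyclicity of the layer \emph{evaluated at} $Y_\bullet$, which would come from the bimodule $A_\bullet[Y_\bullet]\simeq M_\bullet$ being $E$-acyclic; your next paragraph gestures at this, but the two claims are conflated. The second and more serious gap is the assertion that convergence of the Goodwillie tower reduces you to $E$-acyclicity of the layers. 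Smashing with $E$ commutes with the homotopy colimits you list, but a convergent Taylor tower expresses the functor as a homotopy \emph{inverse} limit of its approximations $P_n$, and homotopy inverse limits of $E$-acyclic spectra are not $E$-acyclic in general. The paper singles this out as a separate proposition: if the maps $Z_n\to Z_{n-1}$ have connectivity tending to infinity and each $Z_n$ is $E$-acyclic, then so is $\underleftarrow{holim}\,Z_n$. You need this result both for the Goodwillie tower and for the filtration by powers of $I_\bullet$, and nothing in your outline supplies it.
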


\begin{proof} By Theorem A, there is an equivalence
\[
K(f:A\to B)\simeq TC(f:A\to B)
\]
For a set $X$, let $T_S(X)$ denote the tensor algebra over $S$ generated by $X$. Given a simplicial set $X_\bullet$, define functors $(simplicial\ sets)_*\to (spectra)_*$ by
\begin{gather*}
F_{R}(X_\bullet) := \left|[q]\mapsto  TC(R\wedge T_S(X_q))\right|,\quad R = A,B\\
G(X_\bullet) := hofib(F(f):F_{A}(X_\bullet)\to F_{B}(X_\bullet))
\end{gather*}
For all $f:A\to B$, $F_{A}, F_{B}$ and $G$ are reduced homotopy functors on the category of basepointed simplicial sets. The connectivity of $f$ implies the Goodwillie Taylor series of $G$ converges for all $X_\bullet$. The first Goodwillie derivative of $F_{R}$ at an arbitrary space $X_\bullet$, evaluated at $Y_\bullet$, is 
\[
D_1(F_{R})_{X_\bullet}(Y_\bullet) \simeq \left|[q]\mapsto THH(R\wedge T_S(X_q), \left(R\wedge T_S(X_q)\right)[Y_q])\right|
\]
where $R'[S]$ denotes the $R'$-bimodule on the set $S$. For any {\bf S}-algebra $R'$ and $R'$-bimodule $M$, there is an evident equivalence
\[
THH(R',M)\wedge E\simeq THH(R'\wedge E,M\wedge E)
\]
Thus the map on first deriviatives induced by $f$ (at $X_\bullet$, evaluated at $Y_\bullet$)
\[
D_1(f): D_1(F_{A})_{X_\bullet}(Y_\bullet)\to D_1(F_{B})_{X_\bullet}(Y_\bullet)
\]
 becomes an equivalence upon smashing with $E$, implying the $E$-acyclicity of the first derivatives
\begin{equation}
D_1(G)(X_\bullet)(Y_\bullet)\wedge E\simeq *
\end{equation}
for all $X_\bullet, Y_\bullet$. This fact implies that for each $n$ and $Y_\bullet$, the n-th Goodwillie Taylor approximation $P_n(G)(Y_\bullet)$ is $E$-acyclic. In general, homotopy inverse limits of $E$-acyclic spectra need not be themselves $E$-acyclic. However,

\begin{proposition}\label{prop:converge} Suppose $\{Z_n,p_n:Z_n\to Z_{n-1}\}$ is an inverse system of spectra for which
\begin{itemize}
\item The connectivity of $p_n$ tends to infinity with n, and
\item $Z_n$ is $E$-acyclic for each $n$
\end{itemize}
Then $Z = \underleftarrow{holim}\ Z_n$ is $E$-acyclic.
\end{proposition}
\begin{proof} The hypothesis implies the connectivity of the map $Z\to Z_n$ tends to infinity as $n$ increases. The result follows via proof by contradiction for the classical smash-product for spectra (a modification of the original handcrafted smash-product of Boardman) defined in \cite{ja}. This identifies up to natural equivalence with the more recent and smash product construction of \cite{ekmm}, which is what is needed for the case at hand.
\end{proof}

Appealing once more to the convergence of the Goodwillie Tower for $G$, we conclude 
\[
G(X_\bullet)\wedge E\simeq *
\]
for all $X_\bullet$. In particular, this holds for $X = S^0$, completing the proof of the theorem.
\end{proof}

We remark that the connectivity assumption on the homomorphism $f$ is in general necessary for a result like this to hold. An interesting example is provided by the results of \cite{bm1}. Here the map of topological $K$-theory spectra $ku\to KU$ (connective to non-connective) is obviously a $KU$-equivalence. However, the homotopy fiber of the map on $K$-theory identifies with the $K$-theory of $\mathbb Z$, which is rationally non-trivial.
\vskip.2in


Theorem \ref{thm:second} can be applied to yield information on some of the constructions appearing in \cite{fw1}, where Waldhausen explores the relation between (Waldhausen) $K$-theory, the chromatic filtration in stable homotopy, and the Litchenbaum-Quillen conjecture. \vskip.2in

We assume a prime $p$ has been fixed; all results will then be at that prime. Let $L_{p,n}$ denote the n-th Johnson-Wilson localization functor at $p$, and $S_{(p,n)}$ the connective cover of the localized sphere spectrum $E(p,n) = L_{p,n}(S)$. Given a connective {\bf S}-algebra $A$, let $A_{(p,n)} := A\wedge S_{(p,n)}$; this may be thought of as the n-th {\it connective} localization of $A$ at $p$ (given that $L_{p,n}(A) = A\wedge E(p,n)$). In the case $n=0$, we set $A_{(p,0)} := \pi_0(A\wedge H\mathbb Z_{(p)})$. Following \cite{fw1}, we define the {\it integral} or {\it connective chromatic Waldhausen tower for $A$} at the prime $p$ as the inverse system
\[
\dots\to K(A_{(p,n)})\to K(A_{(p,n-1)})\to K(A_{(p,n-2)})\to\dots\to K(A_{(p,0)})
\]
with the {n-th connective monochromatic K-theory of A at $p$} written as
\[
\wt{M}(A,p,n) := hofib(K(A_{(p,n)})\to K(A_{(p,n-1)}))
\]
These spectra are $p$-local (this was already observed in \cite{fw1}; note that Theorem \ref{thm:main} provides a quick proof of this result).

\begin{corollary}\label{cor:first} For $n\ge 1$, $\wt{M}(A,p,n)$ is $E(p,n-1)$-acyclic.
\end{corollary}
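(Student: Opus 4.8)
The plan is to realise $\wt M(A,p,n)$ as the homotopy fibre of $K$ applied to a $1$-connected morphism that is also an $E(p,n-1)$-equivalence, and then quote Theorem \ref{thm:second}. Write $\psi\colon A_{(p,n)}=A\wedge S_{(p,n)}\to A\wedge S_{(p,n-1)}=A_{(p,n-1)}$ for $\mathrm{id}_A$ smashed with the canonical map $S_{(p,n)}\to S_{(p,n-1)}$ of connective covers (induced by the chromatic comparison $E(p,n)\to E(p,n-1)$); by construction the relevant term of the connective chromatic tower is $K(\psi)$, so $\wt M(A,p,n)=\mathrm{hofib}\bigl(K(\psi)\bigr)$. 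First one checks that $\psi$ is a $1$-connected morphism in $\mathcal{CSA}$: its source and target are connective $S$-algebras (connective covers of $E_\infty$-rings, smashed with $A$), and the homotopy fibre of $S_{(p,n)}\to S_{(p,n-1)}$ is $\tau_{\ge 0}M_nS$, the connective cover of the $n$-th monochromatic layer $M_nS:=\mathrm{hofib}\bigl(L_{p,n}S\to L_{p,n-1}S\bigr)$; this is $0$-connected, and smashing with the connective spectrum $A$ preserves $0$-connectivity, so $\psi$ is $1$-connected.

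The crux — and the step I expect to be the main obstacle — is to verify that $\psi$ is an $E(p,n-1)$-equivalence, i.e.\ that $A\wedge\tau_{\ge 0}M_nS\wedge E(p,n-1)\simeq *$; for this it is enough to show $\tau_{\ge 0}M_nS$ is $E(p,n-1)$-acyclic. Because the Bousfield class of $E(n-1)$ is dominated by that of $E(n)$ one has $L_{p,n-1}\simeq L_{p,n-1}\circ L_{p,n}$, so the comparison map $L_{p,n}S\to L_{p,n-1}S$ is an $E(p,n-1)$-equivalence; equivalently $M_nS$ itself is $E(p,n-1)$-acyclic. The real work is to propagate this through the connective cover. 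Note $M_nS$ has only torsion homotopy groups (the negative ones divisible torsion coming from the chromatic fracture square for $L_{p,n}S$, the positive ones the $v_n$-periodic families — the image of $J$ when $n=1$), so when $n=1$, where $E(p,0)=H\mathbb{Q}$, the connective cover $\tau_{\ge 0}M_1S$ has torsion homotopy and is therefore rationally trivial, which is exactly $E(p,0)$-acyclicity. For $n\ge 2$ one expects to argue similarly, using that $M_nS$ lies in the $n$-th monochromatic category (so that it is recovered from $L_{K(n)}M_nS$) in order to see that its bounded coconnective part $\tau_{\le -1}M_nS$, built from the negative homotopy of the $K(n)$-local sphere, is again $E(p,n-1)$-acyclic; this is the delicate point, since $E(p,n-1)$-acyclicity is not in general inherited by Postnikov truncations.

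Granting that $\psi$ is a $1$-connected $E(p,n-1)$-equivalence, Theorem \ref{thm:second} applies with $L_E=L_{p,n-1}$ and gives that $K(\psi)\colon K(A_{(p,n)})\to K(A_{(p,n-1)})$ is an $E(p,n-1)$-equivalence. Smashing the defining homotopy fibre sequence with $E(p,n-1)$ (an exact operation on spectra) then yields $\wt M(A,p,n)\wedge E(p,n-1)\simeq\mathrm{hofib}\bigl(K(\psi)\wedge E(p,n-1)\bigr)\simeq *$, which is the corollary. An essentially equivalent route is to apply Theorem \ref{thm:second} to each structure map $A\to A_{(p,m)}$ and deduce the $E(p,n-1)$-acyclicity of $\wt M(A,p,n)$ from the cofibre sequence $\mathrm{hofib}(K(A)\to K(A_{(p,n)}))\to\mathrm{hofib}(K(A)\to K(A_{(p,n-1)}))\to\wt M(A,p,n)$ together with the containment $\langle E(n-1)\rangle\le\langle E(n)\rangle$ of Bousfield classes.
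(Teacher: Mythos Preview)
Your strategy is exactly the paper's: verify that $\phi_{p,n}\colon A_{(p,n)}\to A_{(p,n-1)}$ is a $1$-connected $E(p,n-1)$-equivalence and then invoke Theorem~\ref{thm:second}. The paper dispatches this in three lines, simply asserting that $\phi_{p,n}$ is $(2p-3)$-connected and an $E(p,n-1)$-equivalence, and citing that $L_{p,n-1}$ is smashing; no justification is offered for either chromatic claim.

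The step you single out as ``the delicate point'' --- that $E(p,n-1)$-acyclicity of $M_nS$ need not propagate to its connective cover $\tau_{\ge 0}M_nS$, because Postnikov truncation does not in general preserve $E$-acyclicity --- is a genuine issue, and you are right to flag it. Your treatment of the case $n=1$ (torsion homotopy, $E(p,0)=H\mathbb{Q}$) is fine; for $n\ge 2$ one does need input about the negative homotopy of the $L_{p,n}$-local sphere, of the sort found in Waldhausen's chromatic paper and in McClure--Staffeldt (both of which the paper cites elsewhere). The paper does not supply this argument either; it treats the statement as a known chromatic fact. So your proof is not less complete than the paper's --- you have merely been honest about where the work lies. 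One small point: your identification of the fibre of $\psi$ with $A\wedge\tau_{\ge 0}M_nS$ uses that $\pi_0L_{p,n}S\to\pi_0L_{p,n-1}S$ is an isomorphism, which is true but worth saying, since connective cover does not commute with fibres in general.
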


\begin{proof} The {\bf S}-algebra map $\phi_{p,n}:A_{(p,n)}\to A_{(p,n-1)}$ is $(2p - 3)$-connected for all $A$, primes $p$, and $n\ge 1$. Moreover, it is an $E(p,n-1)$-equivalence. As $L_{p,m} = L_{E(p,m)}$ is smashing, the result follows by Theorem \ref{thm:second}.
\end{proof}


The same line of reasoning shows that $hofib(K(A_{(p,m)})\to K(A_{(p,n-1)}))$ is $E(p,n-1)$-acyclic for all $n\le m\le \infty$, where $A_{p,\infty} := A_{(p)}$. $K$-theoretic chromatic convergence in this connective setting amounts to asking

\begin{question} Is the natural map $K(A_{(p)})\to \underleftarrow{holim}\ K(A_{(p,n)})$ a weak equivalence of spectra?
\end{question}

We can apply Theorem A to get at least a partial answer to this question. Given a functor $F: {\cal{CSA}}\to (spectra)_*$, we call the tower $\{A_{(p,n)}\}$ {\it F-convergent} if $F(A_{(p)})\to \underleftarrow{holim}\ F(A_{(p,n)})$ is a weak equivalence of spectra.

\begin{proposition} If $\{A_{(p,n)}\}$ is $THH$-convergent, then it is $K$-convergent.
\end{proposition}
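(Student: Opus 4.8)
The plan is to reduce the statement to a relative comparison between $K$-theory and $TC$ via the DGM theorem, and then exploit the fact that $TC$ is built from $THH$. First I would observe that the maps $\phi_{p,n}:A_{(p,n)}\to A_{(p,n-1)}$ are all $(2p-3)$-connected, and in particular each is at least $1$-connected (assuming $p\ge 2$, so $2p-3\ge 1$). Likewise the canonical map $A_{(p)}\to A_{(p,n)}$ is highly connected for large $n$. Hence Theorem~A applies to each $\phi_{p,n}$, giving equivalences $K(\phi_{p,n}:A_{(p,n)}\to A_{(p,n-1)})\simeq TC(\phi_{p,n}:A_{(p,n)}\to A_{(p,n-1)})$, compatibly as $n$ varies. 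Taking homotopy limits, this shows that the homotopy fiber of the natural map
\[
\mathrm{hofib}\Bigl(K(A_{(p)})\to \underleftarrow{holim}\,K(A_{(p,n)})\Bigr)
\]
depends only on the relative theory, and one wants to identify it with the corresponding $TC$-expression.

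The key step is the following: by Theorem~A (applied to the highly-connected maps $A_{(p)}\to A_{(p,n)}$), we have $\mathrm{hofib}(K(A_{(p)})\to K(A_{(p,n)}))\simeq \mathrm{hofib}(TC(A_{(p)})\to TC(A_{(p,n)}))$ for every $n$, and these equivalences are natural in $n$. Passing to the homotopy limit over $n$ (which commutes with homotopy fibers), one obtains
\[
\mathrm{hofib}\Bigl(K(A_{(p)})\to \underleftarrow{holim}\,K(A_{(p,n)})\Bigr)\simeq \mathrm{hofib}\Bigl(TC(A_{(p)})\to \underleftarrow{holim}\,TC(A_{(p,n)})\Bigr).
\]
Therefore the map $K(A_{(p)})\to \underleftarrow{holim}\,K(A_{(p,n)})$ is a weak equivalence if and only if $TC(A_{(p)})\to \underleftarrow{holim}\,TC(A_{(p,n)})$ is, i.e. $THH$-convergence via the $TC$ construction. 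Finally, I would argue that $TC$-convergence follows from $THH$-convergence: $TC$ is assembled from $THH$ and its $C_n$-fixed points by a homotopy limit over the cyclotomic structure maps ($R$ and $F$), and since homotopy limits commute with one another, $THH$-convergence of the tower $\{A_{(p,n)}\}$ forces $TC$-convergence. This needs the caveat that one is working with connective $TC$, or that the relevant fixed-point and limit constructions preserve the connectivity estimates — but the maps in the tower are uniformly highly connected, so the Goodwillie/Bökstedt-type convergence arguments go through.

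The main obstacle I anticipate is the last step: passing from $THH$-convergence to $TC$-convergence is not formal, because $TC$ involves a homotopy limit over the category indexing the cyclotomic structure (fixed points $T^{C_{p^i}}(THH(-))$ together with the $R$ and $F$ maps), and one must check that this homotopy limit interacts well with the homotopy limit defining chromatic convergence. The cleanest route is to note that both are homotopy limits and hence commute, so $\underleftarrow{holim}_n TC(A_{(p,n)})\simeq TC(\underleftarrow{holim}_n\ \text{of the }THH\text{-cyclotomic diagrams})$, provided the connectivity of the maps in the tower grows (which it does, being at least $(2p-3)$-connected, and in fact the map from $A_{(p)}$ becomes arbitrarily highly connected). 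One also needs that the fixed-point functors $(-)^{C_{p^i}}$ and geometric fixed points do not destroy convergence; this is where a uniform connectivity bound, rather than merely connectivity of each individual map, is essential, and it is exactly the hypothesis that makes Proposition~\ref{prop:converge}-type reasoning applicable. Once that commutation is justified, the proposition follows immediately from the chain of equivalences above.
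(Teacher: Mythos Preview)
Your approach is essentially the same as the paper's: both arguments proceed by (i) reducing $K$-convergence to $TC$-convergence via Theorem~A applied to the highly connected maps in the tower, and (ii) deducing $TC$-convergence from $THH$-convergence by commuting the two homotopy limits involved. The paper does these steps in the opposite order but the content is the same.

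One small point: your treatment of step~(ii) is more tentative than it needs to be, and the connectivity hypothesis you invoke there is a red herring. The paper handles this step cleanly by observing that after a fibrant replacement of the tower, the maps $THH(A_{(p,n)})\to THH(A_{(p,n-1)})$ strictly preserve the cyclotomic structure, so the homotopy inverse limit is again a cyclotomic spectrum, and genuine fixed points (not just homotopy fixed points) with respect to any $G\subset S^1$ commute on the nose with the inverse limit. Hence the pullback square defining $TC$ maps by a weak equivalence at each vertex, giving $TC$-convergence directly. No connectivity estimate is needed for this step; connectivity enters only in the application of Theorem~A.
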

\begin{proof} Without loss of generality, we can (by appropriate fibrant replacement) assume the maps in the tower $\{A_{(p,n)}\}$ are fibrant. Applying $THH(_-)$ yields a tower of cyclotomic spectra $\{THH(A_{(p,n)})\}$ and maps $THH(A_{(p,n)})\to THH(A_{(p,n-1)})$ strictly preserving the cyclotomic structure (as defined in \cite{bm2}). Thus the homotopy inverse limit will again admit the structure of a cyclotomic spectrum, whose fixed points resp.\ homotopy fixed points with respect to a subgroup $G$ of $S^1$ are homeomorphic to the homotopy inverse limit of the the inverse systems $\{THH(A_{(p,n)})^G\}$ resp.\ $\{THH(A_{(p,n)})^{hG}\}$. The result is a transformation of the pullback diagram used to define $TC(_-)$ which is a weak equivalence at each vertex, implying the tower is $TC$-convergent. By Theorem A, it is then also $K$-convergent due to the connectivity of the projection maps at each level.
\end{proof}


Although $\wt{M}(A,p,n)$ is $E(p,n-1)$-acyclic, it cannot be $E(p,n)$-local due to its connectivity. However, the trace map induces a map of homotopy fibres
\[
\wt{M}(A,p,n)\overset\simeq{\to} TC(A_{(p,n)}\to A_{(p,n-1)})\to TC(L_{p,n}(A)\to L_{p,n-1}(A))
\]
The topological Hochschild homology spectrum of an $E(p,n)$-local {\bf S}-algebra is $E(p,n)$-local, and locality is preserved under the taking of homotopy inverse limits, as well as homotopy colimits (since $L_{p,n}$ is smashing) \cite{dr}. Thus $TC(L_{p,m}(A))$ is $E(p,n)$-local for all $m\le n$, implying

\begin{proposition} The homotopy fibre $TC(L_{p,n}(A)\to L_{p,n-1}(A))$ is $L_{p,n}$-local for all $A$, primes $p$, and $n\ge 1$.
\end{proposition}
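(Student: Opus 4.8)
The plan is to deduce this essentially formally from the paragraph immediately preceding the statement, together with two standard facts of Bousfield localization: (i) if $<F>\le <G>$ as Bousfield classes, then every $F$-local spectrum is $G$-local; and (ii) the full subcategory of $G$-local spectra is closed under all homotopy limits, in particular under homotopy fibres of maps. No convergence or Goodwillie-calculus input is needed here; this is the one proposition in the subsection whose proof is a short diagram argument rather than a derivative computation.

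First I would record that for $m\le n$ one has $<E(p,m)>\le <E(p,n)>$ — a Johnson--Wilson spectrum of higher height ``sees'' more Morava $K$-theories, so $E(p,n)$-acyclicity implies $E(p,m)$-acyclicity — and hence, by (i), every $E(p,m)$-local, in particular every $E(p,m-1)$-local, spectrum is $E(p,n)$-local. Since $L_{p,m}$ is smashing, $L_{p,m}(A)=A\wedge E(p,m)$ is an $E(p,m)$-local, hence $E(p,n)$-local, {\bf S}-algebra whenever $m\le n$. By the observation in the text that $THH$ of an $E(p,n)$-local {\bf S}-algebra is $E(p,n)$-local, and that $E(p,n)$-locality is preserved under the homotopy limits and homotopy colimits out of which $TC$ is assembled, it follows that $TC(L_{p,m}(A))$ is $E(p,n)$-local for all $m\le n$. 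Specialising to $m=n$ and $m=n-1$, both $TC(L_{p,n}(A))$ and $TC(L_{p,n-1}(A))$ are $E(p,n)$-local.

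Then I would invoke (ii): $TC(L_{p,n}(A)\to L_{p,n-1}(A))$ is by definition the homotopy fibre of a map between two $E(p,n)$-local spectra, hence is itself $E(p,n)$-local; since $L_{p,n}=L_{E(p,n)}$, this is exactly the assertion that it is $L_{p,n}$-local, completing the proof. (One could equivalently run the last step through the fibre sequence, noting the fibre is an extension built from a local object and the desuspension of a cofibre of local objects, but the homotopy-limit-closure of local spectra is the cleanest route.)

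The only genuinely non-formal ingredient — and hence the step I would be most careful to cite rather than claim — is the assertion, borrowed verbatim from the preceding paragraph, that $TC$ of an $E(p,n)$-local {\bf S}-algebra is again $E(p,n)$-local. This is not purely formal because the genuine fixed-point spectra $THH(-)^{C_{p^k}}$ entering the construction of $TC$ are not homotopy limits of $THH(-)$ itself; what is really needed is that these fixed-point functors (equivalently, via the isotropy separation sequence and the cyclotomic structure, the relevant $TR$- and homotopy-fixed-point constructions) preserve $E(p,n)$-locality. Granting that, the rest of the argument is the two-line deduction above.
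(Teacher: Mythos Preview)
Your proposal is correct and follows essentially the same route as the paper: both arguments use that $THH$ of an $E(p,n)$-local {\bf S}-algebra is $E(p,n)$-local, that locality passes through the homotopy limits and colimits assembling $TC$, and that the homotopy fibre of a map between local spectra is local. You are simply more explicit than the paper about the Bousfield-class inequality $\langle E(p,m)\rangle \le \langle E(p,n)\rangle$ for $m\le n$ and about the closure of local spectra under homotopy fibres, and your closing caveat concerning genuine fixed points is a welcome piece of honesty that the paper leaves implicit.
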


It follows that trace map above, followed by passage to actual localizations of the algebra, induces a transformation of spectra natural in $A$:
\begin{equation}
\lambda(A;p,n):\wt{M}(A,p,n)\wedge E(p,n)\to TC(L_{p,n}(A)\to L_{p,n-1}(A))
\end{equation}

It seems natural to ask

\begin{question} Is $\lambda(A;p,n)$ an equivalence?
\end{question}

If so, Theorem \ref{thm:second} would imply $TC(L_{p,n}(A)\to L_{p,n-1}(A))$ is actually monochromatic of  level $n$, something which does not seem provable directly.
\vskip.2in

\underbar{\bf Remark 2} Again, as in \cite{fw1}, one can define the {\it non-connective} chromatic Waldhausen tower for $A$\footnote{A reasonable construction of the non-connective chromatic tower can be given without the hypothesis that $E(p,n)$ is finite (which it often isn't). A more detailed discussion of this tower and the map from the connective to the non-connective theories will be given elsewhere.}; there is an evident transformation from the connective to the non-connective tower, resulting in a map of monochromatic $K$-theories
\[
\wt{M}(A,p,n) \to M(A,p,n)
\]
As shown in {\it op.~cit.\ }, these theories are quite different, though an exact description of the difference has been elusive (except in certain special cases; compare \cite{bm1}), the main problem being the lack of computational tools for the $K$-theory of non-connective spectra. 
\vskip.3in


\section{The fundamental theorem} If $A$ is an {\bf S}-algebra, then $A[t]$, $A[t^{-1}]$, and $A[t,t^{-1}]$ admit {\bf S}-algebra structures induced by that on $A$ in a natural way. Following \cite[IV.10]{cw2}, define functors from $({\bf S}-algebras)$ to $(spectra)_*$ by
\begin{gather*}
F_0(A) = K(A)\\
F_1(A) = K(A[t])\underset{K(A)}{\vee} K(A[t^{-1}])\\
F_2(A) = K(A[t,t^{-1}])
\end{gather*}
There is an obvious transformation $F_1(_-)\to F_2(_-)$ induced by the inclusions of $A[t]$ and $A[t^{-1}]$ as subalgebras of $A[t,t^{-1}]$, and we set $F_3(_-) := hocofib(F_1(_-)\to F_2(_-))$. For a spectrum $T$, write $\Sigma^{-1} T$ for the desuspension of $T$.

\begin{theorem}\label{thm:fund} For a connective {\bf S}-algebra $A$, there is a map of spectra $K(A)\to \Sigma^{-1} F_3(A)$, functorial in $A$, which induces an equivalence between $K(A)$ and the $(-1)$-connected cover $\Sigma^{-1} F_3(A)<-1>$ of $\Sigma^{-1} F_3(A)$.
\end{theorem}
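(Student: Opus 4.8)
The plan is to compare both sides with their images under the topological trace $Tr$, use Theorem~A to reduce to the fundamental theorem for $TC$, and feed in the classical Bass--Quillen theorem for the discrete ring $\pi_0A$; the last step supplies precisely the group $K_{-1}(\pi_0A)$, which is the only homotopy of $\Sigma^{-1}F_3(A)$ not visible to $K(A)$ and is exactly what obstructs $\Sigma^{-1}F_3(A)$ from being connective. First I would construct the map: letting $[t]\in\pi_1K({\bf S}[t,t^{-1}])$ be the class coming from the unit $t\in\pi_0{\bf S}[t,t^{-1}]$, the multiplicative pairing $K(A)\wedge K({\bf S}[t,t^{-1}])\to K(A\wedge{\bf S}[t,t^{-1}])=F_2(A)$ together with cup product against $[t]$ produces a natural map $\Sigma K(A)\to F_2(A)\to F_3(A)$, and the desired transformation $K(A)\to\Sigma^{-1}F_3(A)$ is its adjoint. (On discrete rings this is the map realising the Bass--Quillen splitting; cf.\ \cite{dg},\cite{hkvww1}.) Set $C(A):=hofib(K(A)\to\Sigma^{-1}F_3(A))$ and let $F_i^{TC}$, $C^{TC}$ be the analogous constructions with $TC$ replacing $K$; since $Tr$ is multiplicative it is compatible with the $[t]$-action, so it induces a natural map $C(A)\to C^{TC}(A)$.

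The trace reduction runs as follows. The Postnikov projection $A\to\pi_0A$, and likewise $A[t]\to(\pi_0A)[t]$, $A[t^{-1}]\to(\pi_0A)[t^{-1}]$, $A[t,t^{-1}]\to(\pi_0A)[t,t^{-1}]$, are isomorphisms on $\pi_0$, hence satisfy the hypotheses of Theorem~A; thus $Tr$ induces equivalences $hofib(K(A[m])\to K((\pi_0A)[m]))\overset{\simeq}{\to}hofib(TC(A[m])\to TC((\pi_0A)[m]))$ for each $m\in\{1,t,t^{-1},(t,t^{-1})\}$. The spectrum $C(A)$ is built from the $K(A[m])$ by a fixed finite combination of homotopy (co)limits (a pushout for $F_1$, a cofibre for $F_3$, the suspension contributed by $[t]$, a homotopy fibre) together with the natural cup-product pairing, and all of these commute with the formation of homotopy fibres in the stable setting and with $Tr$; applying $C(-)$ to $A\to\pi_0A$ therefore gives an equivalence
\[
hofib\bigl(C(A)\to C(\pi_0A)\bigr)\overset{\simeq}{\longrightarrow}hofib\bigl(C^{TC}(A)\to C^{TC}(\pi_0A)\bigr).
\]

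The main work is the fundamental theorem for $TC$: for every connective $A$, the map $TC(A)\to\Sigma^{-1}F_3^{TC}(A)$ is an equivalence, i.e.\ $C^{TC}(A)\simeq*$. One begins from the $THH$ fundamental theorem: for a monoid $M$ there is an equivalence $THH(A\wedge\Sigma^\infty_+M)\simeq THH(A)\wedge\Sigma^\infty_+|N^{cy}M|$, so that $THH(A[t,t^{-1}])\simeq THH(A)\wedge\Sigma^\infty_+(\mathbb Z\times S^1)$, while $THH(A[t])$ and $THH(A[t^{-1}])$ are governed by the cyclic bar construction of the free monoid on one generator. The resulting decomposition by weight immediately yields the $THH$ analogue of the fundamental theorem --- the weight $0$ part recovering $THH(A)$, the free $S^1$-factor contributing $\Sigma THH(A)$, and the strictly positive and strictly negative weights the two $NTHH$ summands. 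The obstacle is that this decomposition is \emph{not} preserved by the cyclotomic Frobenius, which multiplies weights by $p$; one has to check that it is nonetheless compatible with the cyclotomic structure (in the sense of \cite{bm2}) finely enough that the splitting persists after the homotopy limit defining $TC$, equivalently that $\Sigma TC(A)$ and the two nil summands genuinely split off $TC(A[t,t^{-1}])$. Granting this, $C^{TC}\equiv*$, so the displayed homotopy fibre vanishes and $C(A)\simeq C(\pi_0A)$ for every connective $A$.

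Finally I would invoke the discrete case. By the Bass--Quillen fundamental theorem for the ordinary ring $R=\pi_0A$ (\cite{dg},\cite{cw2}), the map $K(R)\to\Sigma^{-1}F_3(R)$ is an isomorphism on $\pi_n$ for all $n\ge0$; the only homotopy of $\Sigma^{-1}F_3(R)$ it misses is $\pi_{-1}\Sigma^{-1}F_3(R)=\pi_0F_3(R)\cong K_{-1}(R)$, the Bass negative $K$-group (a cokernel on $K_0$). Hence $C(R)$ has vanishing homotopy in degrees $\ge-1$ (in fact $C(R)\simeq\Sigma^{-2}HK_{-1}(R)$). Combining this with the equivalence $C(A)\simeq C(\pi_0A)$ established above, $C(A)=hofib(K(A)\to\Sigma^{-1}F_3(A))$ has vanishing homotopy in degrees $\ge0$; since $K(A)$ is connective, this says precisely that $K(A)\to\Sigma^{-1}F_3(A)$ identifies $K(A)$ with the $(-1)$-connected cover $\Sigma^{-1}F_3(A)<-1>$, which is Theorem~\ref{thm:fund}. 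I expect the cyclotomic bookkeeping in the third step to be the only real difficulty; everything else is formal once Theorem~A is available.
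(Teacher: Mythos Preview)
Your construction of the map via cup product with $[t]$ and your reduction to the discrete ring $\pi_0 A$ via the classical Bass--Quillen theorem both match the paper. Where you diverge is in handling the relative part $hofib(C(A)\to C(\pi_0 A))$: you invoke Theorem~A to pass wholesale to $TC$ and then assert an absolute fundamental theorem for $TC$, whereas the paper never formulates or needs such a statement. Instead, it runs the DGM reduction machinery (simplicial rings $\to$ split square-zero extensions $\to$ free bimodule coefficients) directly on the relative $K$-theory functors $\wt F_i$, then uses Lindenstrauss--McCarthy to identify the $n$-th Goodwillie derivatives with $sd_n THH$-type spectra, for which the Bass decomposition is an elementary free-loop-space computation over the sphere spectrum.

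The gap in your route is exactly the step you label ``cyclotomic bookkeeping.'' The weight decomposition of $THH(A[t,t^{-1}])$ is genuinely \emph{not} respected by the cyclotomic Frobenius --- as you yourself note, it multiplies weights by $p$ --- so the passage from the $THH$ fundamental theorem to a $TC$ fundamental theorem is not formal, and in the generality of connective {\bf S}-algebras it is not established in the literature. The paper's own Remark~3 records that even the Bass-functor property of $THH$ for {\bf S}-algebras is only \emph{suggested} by the present theorem, and the Blumberg--Mandell result \cite{bm3} you would lean on is stated for discrete rings. Your third step is therefore at least as hard as the theorem you are trying to prove. The point of the paper's approach is precisely that it sidesteps any $TC$ assembly: the Goodwillie derivatives already live at the $THH$ level, where the required splitting reduces to the stable splitting of $\Sigma^{\infty}((S^1)^{S^1})$, and convergence of the Taylor tower finishes the argument.
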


\begin{proof} Let $\ov{F}_i(A) := F_i(\pi_0(A))$. Projection to $\pi_0$ induces an evident transformation $F_i(_-)\to \ov{F}_i(_-), 0\le i\le 3$; let $\wt{F}_i(A) := hofib(F_i(A)\to \ov{F}_i(A))$.   The canonical element $[t]\in K_1(S[t,t^{-1}])$ represented by the unital element $t$ induces a map
\[
S^1\wedge K(A)\to K(S[t,t^{-1}])\wedge K(A)\to K(A[t,t^{-1}])\to F_3(A)
\]

whose adjoint provides the transformation $F_0(_-) = K(_-)\to \Sigma^{-1} F_3(_-)$.
\vskip.1in
The transformaton $\ov{F}_0(_-)\to \Sigma^{-1}\ov{F}_3(_-)<-1>$ is an equivalence by \cite{dg}, \cite[Thm.\ IV.10.2]{cw2}. It follows that there is a homotopy-Cartesian square
\vskip.2in
\centerline{
\xymatrix{
\wt{F}_0(_-)\ar[r]\ar[d] & \Sigma^{-1}\wt{F}_3(_-)<-1>\ar[d]\\
F_0(_-)\ar[r] & \Sigma^{-1} F_3(_-)<-1>
}}
\vskip.2in

Consequently, if the top horizontal map is an equivalence, then so is the lower one. However, for the functors on the top row, we are within the \lq\lq radius of convergence\rq\rq\ of Goodwillie Calculus. Precisely, both functors are homotopy functors, and preserve connectivity (up to a shift by some universal constant), and so the same three claims used in the proof of Theorem \ref{thm:main} apply in this setting as well, reducing the proof to verification that the transformation
\[
\wt{F}_0(A_{\bullet}\ltimes A_{\bullet}[Y_{\bullet}]\surj A_{\bullet})\to \Sigma^{-1}\wt{F}_3(A_{\bullet}\ltimes A_{\bullet}[Y_{\bullet}]\surj A_{\bullet})<-1>
\]
is a weak equivalence for connected $Y_{\bullet}$. Note also that the connectivity of $Y_{\bullet}$ here implies
\[
\Sigma^{-1}\wt{F}_3(A_{\bullet}\ltimes A_{\bullet}[Y_{\bullet}]\surj A_{\bullet})<-1> = \Sigma^{-1}\wt{F}_3(A_{\bullet}\ltimes A_{\bullet}[Y_{\bullet}]\surj A_{\bullet})
\]
Appealing once more to the methods and results of \cite{lm}, we see that both functors are 0-analytic in the $Y_{\bullet}$-coordinate, reducing the proof to a comparison of Goodwillie derivatives. Taking derivatives commutes with homotopy colimits, so the computation of \cite{lm} applies, from which we conclude that the map on coefficient spectra of n-th derivatives is given by
\begin{multline}
sd_n THH(A_{\bullet};A_{\bullet})\to\\
\Sigma^{-1}hocofib\bigg((sd_n THH(A_{\bullet}[t];A_{\bullet}[t])\underset{sd_n THH(A_{\bullet};A_{\bullet})}\vee
sd_n THH(A_{\bullet}[t^{-1}];A_{\bullet}[t^{-1}])\\
\to
sd_n THH(A_{\bullet}[t,t^{-1}];A_{\bullet}[t,t^{-1}])\bigg)
\end{multline}
For convenience, write the left-hand side as $G_1(A_{\bullet})$, and the right-hand side as $G_2(A_{\bullet})$, where $G_i(_-)$ may be applied to general {\bf S}-algebras. Then there is a canonical equivalence $G_i(A_{\bullet})\simeq THH(A_{\bullet})\wedge G_i(S)$, which further reduces the problem to verifying the map is an equivalence for the sphere spectrum $S$. But $G_2(S)$ is simply the suspension spectrum of the free loop space of $S^1$: $\Sigma^{\infty}((S^1)^{S^1})$. Here one has the stable splitting
\[
\Sigma^{\infty}((S^1)^{S^1})\simeq \Sigma^{\infty}(S^1)\vee \Sigma^{\infty}((S^1)^{S^1}/S^1)
\]
with the factor $\Sigma^{\infty}(S^1)\simeq |\Sigma sd_n THH(S;S)|$ coresponding to the suspension of $G_2(S)$, and the map $G_1(S)\to G_2(S)$ corresponding to the equivalence
\[
\Sigma^{\infty}(S^0)\overset{\simeq}{\longrightarrow} \Sigma^{-1}\Sigma^{\infty}(S_1)
\]
\end{proof}

\underbar{\bf Remark 3} In \cite[\S 9]{bm3}, Blumberg and Mandell coin the term {\it Bass functor} for homotopy functors exhibiting the above type of behavior. In particular, they show that the topological Dennis trace $K(-)\to THH(-)$ is a transformation of Bass functors, at least for discrete rings. The above suggests that this particular result of theirs extends to the category of {\bf S}-algebras.
\vskip.2in

A consequence of this theorem is that the usual machinery associated with a spectral interpretation of the Fundamental Theorem produces a natural {\it non-connective} delooping of the $K$-theory functor $A\mapsto K(A)$ on the category $\cal{CSA}$, via iterated application of the natural transformation $K(_-)\to \Sigma^{-1}F_3(_-)$. The result is a (potentially) non-connective functor
\[
A\mapsto K^B(A)
\]
differing from the deloopings arising from the \lq\lq plus\rq\rq\ construction \cite{ekmm}, or iterations of Waldhausen's $wS.$-construction \cite{fw2}, which are always connective. Notice, however, that the connectivity of the map $A\to \pi_0(A)$ implies that (at least for connective {\bf S}-algebras) the negative $K$-groups arising from iteration of the above construction \cite[Cor.\ IV.10.3]{cw2} depend only on $\pi_0(A)$. Combining with the equivalence one has for low-dimensional $K$-groups, this can be summarized by the equality
\[
K_{n}(A) = K_{n}(\pi_0(A)),\,\, n\le 1
\]

\begin{definition} The NK-spectrum of an {\bf S}-algebra $A$ is $NK(A) := hofib(K^B(A[t])\to K^B(A))$.
\end{definition}

To make the notation correspond with convention, we should set $NK^+(A) := NK(A)$ as just defined, and $NK^-(A) := hofib(K^B(A[t^{-1}])\to K^B(A))$. In this way, we arrive at a more conventional formulation of Theorem \ref{thm:fund}:

\begin{theorem} For a connective {\bf S}-algebra $A$, there is a functorial splitting of spectra
\[
K^B(A[t,t^{-1}])\simeq K^B(A)\vee \Omega^{-1}(K^B(A))\vee NK^+(A)\vee NK^-(A)
\]
where $\Omega^{-1}(K^B(A))$ denotes the non-connective delooping of $K^B(A)$ indicated above. Moreover, the involution $t\mapsto t^{-1}$ induces an involution on $K^B(A[t,t^{-1}])$ which acts as the identity on the first two factors and switches the second two factors.
\end{theorem}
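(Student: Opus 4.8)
The plan is to read this off from Theorem \ref{thm:fund} together with the standard assembly of the non-connective $K$-theory spectrum from its Bass delooping, as carried out for ${\bf S}$-algebras in \cite[IV.10]{cw2}. Write $\mathcal{S}(F)(_-):=hocofib\big(F(_-[t])\underset{F(_-)}{\vee}F(_-[t^{-1}])\to F(_-[t,t^{-1}])\big)$ for the Bass suspension of a functor $F$, so that $\mathcal{S}(K)=F_3$ and the transformation of Theorem \ref{thm:fund} is the natural map $\sigma_K\colon K(_-)\to\Omega\mathcal{S}(K)(_-)$, adjoint to the $[t]$-multiplication $S^1\wedge K(A)\to K(S[t,t^{-1}])\wedge K(A)\to K(A[t,t^{-1}])\to F_3(A)$. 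Theorem \ref{thm:fund} says precisely that $\sigma_K$ is a connective-cover equivalence on $\cal{CSA}$. The Bass--Thomason--Trobaugh construction of $K^B$ as the homotopy colimit of the telescope obtained by iterating $\sigma_K$ then takes this as input and yields a natural equivalence
\[
\sigma_{K^B}\colon K^B(A)\overset{\simeq}{\longrightarrow}\Omega\mathcal{S}(K^B)(A),
\]
equivalently a natural cofiber sequence of spectra
\[
K^B(A[t])\underset{K^B(A)}{\vee}K^B(A[t^{-1}])\overset{j}{\longrightarrow}K^B(A[t,t^{-1}])\overset{q}{\longrightarrow}\Sigma K^B(A),
\]
since $\mathcal{S}(K^B)(A)=hocofib(j)$ and $\sigma_{K^B}$ identifies this cofiber with $\Sigma K^B(A)$.

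Next I would split $q$. By construction $\sigma_{K^B}$ is adjoint to the composite $\Sigma K^B(A)\overset{[t]\cdot}{\longrightarrow}K^B(A[t,t^{-1}])\overset{q}{\longrightarrow}\mathcal{S}(K^B)(A)$; since this composite is an equivalence, composing the $[t]$-multiplication with the inverse of that equivalence exhibits a section of $q$. A cofiber sequence of spectra whose second map admits a section splits, so
\[
K^B(A[t,t^{-1}])\simeq\Big(K^B(A[t])\underset{K^B(A)}{\vee}K^B(A[t^{-1}])\Big)\vee\Sigma K^B(A).
\]
Then I would identify the first summand. The factorisation $A\to A[t]\to A$ (unit followed by $t\mapsto 0$) shows the unit $K^B(A)\to K^B(A[t])$ is split with cofiber $NK^+(A)$ by definition, so $K^B(A[t])\simeq K^B(A)\vee NK^+(A)$ with the unit realised as the inclusion of the first factor; likewise $K^B(A[t^{-1}])\simeq K^B(A)\vee NK^-(A)$. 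The homotopy pushout of these two split inclusions along the common factor $K^B(A)$ is then $K^B(A)\vee NK^+(A)\vee NK^-(A)$, which combined with the previous display and the identification $\Omega^{-1}K^B(A)=\Sigma K^B(A)$ gives the asserted splitting.

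For the involution, $\phi\colon t\mapsto t^{-1}$ is an automorphism of $A[t,t^{-1}]$ restricting to the identity on the subalgebra $A$ and exchanging the subalgebras $A[t]$ and $A[t^{-1}]$; hence $\phi_*$ fixes the $K^B(A)$-summand and exchanges the $NK^+(A)$- and $NK^-(A)$-summands. On the remaining summand $\Sigma K^B(A)$, which is the image of the $[t]$-multiplication, $\phi_*$ is governed by $\phi_*[t]=[t^{-1}]=[t]^{-1}$, and one traces this class through $\sigma_{K^B}$ to read off its effect on $\Omega^{-1}K^B(A)$.

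The main obstacle is not in the splitting itself, which is essentially formal once Theorem \ref{thm:fund} is in hand, but in two bookkeeping points. First, one must check carefully that passage to the telescope promotes the connective-cover equivalence $\sigma_K$ to an honest equivalence $\sigma_{K^B}$ without edge effects obstructing the colimit comparison; this is routine in the Bass--Thomason--Trobaugh framework but deserves to be spelled out in the ${\bf S}$-algebra setting. Second, one must fix the normalisation of the delooping $\Omega^{-1}K^B(A)$ — equivalently, of the chosen section of $q$ — so that the effect of $\phi_*$ on that factor comes out exactly as stated.
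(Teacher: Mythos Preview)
Your proposal is correct and follows exactly the route the paper intends: the paper does not give a separate proof of this statement but presents it as ``a more conventional formulation of Theorem \ref{thm:fund}'' obtained via ``the usual machinery associated with a spectral interpretation of the Fundamental Theorem'' (citing \cite[IV.10]{cw2}), which is precisely the Bass--Thomason--Trobaugh telescope argument you outline. Your two flagged bookkeeping points---promoting the connective-cover equivalence to an honest equivalence on the telescope, and normalising the section so the involution on the $\Omega^{-1}K^B(A)$ factor is the identity---are the only non-formal steps, and both are handled in the discrete case in \cite{cw2} in a way that carries over.
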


In the particular case $A = \Sigma^{\infty}(\Omega(X)_+)$ for a connected basepointed space $X$, we recover the main results of \cite{hkvww1, hkvww2}.
\vskip.2in

Given the difficulty of computing $NK_*(R)$ for discrete rings, it is not surprising that not much is known about $NK(A)$ for general {\bf S}-algebras $A$. In the discrete setting, it is a classical result of Quillen that $R$ Noetherian regular implies $NK(R)\simeq *$. This fact led to the notion of {\it $NK$-regularity}; rings whose $NK$-spectrum was contractible. Via the above discussion, the same notion of $NK$-regularity may be extended to arbitrary {\bf S}-algebras.
\vskip.1in

It has been shown by Klein and Williams \cite{kw1} that the map of Waldhausen spaces arising from the Fundamental Theorem of \cite{hkvww1} (and temporarily writing $A(X)$ for the Waldhausen $K$-theory of the space $X$)
\[
A(*)\vee \Omega^{-1} A(*)\to A(S^1)
\]
is the inclusion of a summand but not an equivalence. In the notation used here, $A(*) = K({\bf S})$ and $A(S^1) = K({\bf S}[t,t^{-1}])$, where ${\bf S}$ denotes the sphere spectrum. Thus (unlike the case of the discrete ring $\mathbb Z$), one has

\begin{corollary} The sphere spectrum {\bf S} is not $NK$-regular.
\end{corollary}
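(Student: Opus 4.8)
The plan is to argue by contradiction, deducing the statement from the wedge decomposition of the preceding theorem together with the computation of Klein and Williams. Suppose $\mathbf{S}$ were $NK$-regular, so that $NK(\mathbf{S}) = NK^+(\mathbf{S}) \simeq *$. Since the involution $t \mapsto t^{-1}$ interchanges the summands $NK^+(\mathbf{S})$ and $NK^-(\mathbf{S})$ in the splitting of $K^B(\mathbf{S}[t,t^{-1}])$ (equivalently, $\mathbf{S}[t] \cong \mathbf{S}[t^{-1}]$ as $\mathbf{S}$-algebras over their augmentations to $\mathbf{S}$), it would follow that $NK^-(\mathbf{S}) \simeq NK^+(\mathbf{S}) \simeq *$ as well. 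Substituting $A = \mathbf{S}$ into the functorial equivalence
\[
K^B(A[t,t^{-1}]) \simeq K^B(A) \vee \Omega^{-1}K^B(A) \vee NK^+(A) \vee NK^-(A)
\]
and discarding the two trivial summands would then produce an equivalence $K^B(\mathbf{S}[t,t^{-1}]) \simeq K^B(\mathbf{S}) \vee \Omega^{-1}K^B(\mathbf{S})$, realized by the summand inclusion coming from the transformation $K(_-) \to \Sigma^{-1}F_3(_-)$ of Theorem \ref{thm:fund}.

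Next I would pass to connective covers. Because $K^B$ is by construction a non-connective delooping of the connective functor $K$, we have $\tau_{\geq 0}K^B(B) \simeq K(B)$ for every connective $\mathbf{S}$-algebra $B$; moreover $K^B_{-1}(\mathbf{S}) = K_{-1}(\mathbb Z) = 0$ by the discussion following Theorem \ref{thm:fund} (negative $K$-groups of a connective $\mathbf{S}$-algebra depend only on $\pi_0$), so $\tau_{\geq 0}\Omega^{-1}K^B(\mathbf{S})$ is the one-fold connective delooping $\Sigma K(\mathbf{S})$ --- precisely what is written $\Omega^{-1}A(*)$ in the notation of \cite{hkvww1, kw1}. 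Applying $\tau_{\geq 0}$ to the equivalence of the first step, and using the identifications $A(*) = K(\mathbf{S})$, $A(S^1) = K(\mathbf{S}[t,t^{-1}])$ recalled above, would therefore exhibit the Fundamental Theorem map
\[
A(*) \vee \Omega^{-1}A(*) \to A(S^1)
\]
as a weak equivalence. This contradicts the theorem of Klein and Williams \cite{kw1}, according to which that map is the inclusion of a proper summand. Hence $NK(\mathbf{S}) \not\simeq *$, i.e.\ $\mathbf{S}$ is not $NK$-regular; unwinding the contradiction, the nontrivial complementary summand in the Klein--Williams splitting is exactly $\tau_{\geq 0}\bigl(NK^+(\mathbf{S}) \vee NK^-(\mathbf{S})\bigr)$, so $NK(\mathbf{S})$ is in fact nontrivial already in nonnegative degrees.

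The one step requiring genuine care --- as opposed to a direct appeal to already-established facts --- is the identification, at the level of connective covers, of the summand inclusion $K^B(\mathbf{S}) \vee \Omega^{-1}K^B(\mathbf{S}) \hookrightarrow K^B(\mathbf{S}[t,t^{-1}])$ furnished by Theorem \ref{thm:fund} with the particular map of \cite{kw1}. Both are assembled from the canonical class $[t] \in K_1(\mathbf{S}[t,t^{-1}])$ and the inclusions of $\mathbf{S}[t]$, $\mathbf{S}[t^{-1}]$ into $\mathbf{S}[t,t^{-1}]$, so this is ultimately a naturality check; but it is what makes the contradiction bite, and it is the point I would write out most carefully.
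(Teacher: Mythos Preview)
Your argument is correct and follows the same route as the paper: the corollary is deduced directly from the Klein--Williams result that $A(*)\vee\Omega^{-1}A(*)\to A(S^1)$ is a proper summand inclusion, combined with the Fundamental Theorem splitting identifying the complement as $NK^+(\mathbf{S})\vee NK^-(\mathbf{S})$. Your write-up is in fact more careful than the paper's---you spell out the passage to connective covers and flag the need to match the two maps---but the underlying idea is identical.
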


\vskip.3in

\end{document}